\providecommand{\U}[1]{\protect\rule{.1in}{.1in}}
\newtheorem{theorem}{Theorem}
\newtheorem{corollary}[theorem]{Corollary}
\newtheorem{lemma}[theorem]{Lemma}
\newtheorem{proposition}[theorem]{Proposition}
\newenvironment{proof}[1][Proof]{\noindent\textbf{#1.} }{\ \rule{0.5em}{0.5em}}
\begin{document}

\title{\textbf{Propagation of Chaos }\\\textbf{and }\\\textbf{Poisson Hypothesis\footnotetext{Part of this work has been carried out
in the framework of the Labex Archimede (ANR-11-LABX-0033) and of the A*MIDEX
project (ANR-11-IDEX-0001-02), funded by the \textquotedblleft
Investissements d'Avenir" French Government programme managed by the French
National Research Agency (ANR). Part of this work has been carried out at
IITP RAS. The results of Sections 3--6 were obtained with the support of
Russian Foundation for Sciences (project No. 14-50-00150), which is
gratefully acknowledged.}}}
\author{Serge Pirogov$^{\dag\dag},$ Alexander Rybko$^{\dag\dag}$,
\and Senya Shlosman$^{\dag,\dag\dag}$ and Alexander Vladimirov$^{\dag\dag}$\\$^{\dag}$Aix Marseille Univ, Universit\'{e} de Toulon, \\CNRS, CPT, Marseille, France;\\$^{\dag\dag}$Inst. of the Information Transmission Problems,\\RAS, Moscow, Russia }
\maketitle

\begin{abstract}
We establish the Strong Poisson Hypothesis for symmetric closed networks.
In particular, the asymptotic independence of the nodes as the size of the
system tends to infinity is proved.
\end{abstract}

\section{Introduction}

In this paper we consider simple symmetric closed networks consisting of $N$
servers and $M$ customers. Each server has its own infinite buffer, where the
customers are queuing for service -- so there are $N$ queues. The service
discipline in all queues is FIFO with i.i.d. service times with the
distribution function $F(x)$, $0\leq x<\infty$. We  list the conditions on $F$ in Section \ref{SS1}.
They include the continuity of its density $f(x)$ and the finiteness of the second moment.

The network is maximally symmetric; each customer that has finished its
service at some server is placed afterwords at the end of one of the $N$
queues with probability $1/N$. It can be described by a Markov process.
Namely, for each queue $i=1,...,N$ let us consider the elapsed time $t_{i}$
of service of the customer which is on service now. The state of Markov
process ${\mathcal{A}}_{N,M}(t)$ is the set of lengths of $N$ queues (which
are integers) and the set of elapsed times of service of all customers which
are on service now.

The general algebraic structure governing the symmetric network is the
symmetry group of the Markov process. In our case it is the permutation group
of $N$ elements. If some group $G$ acts on the phase space $X$ of the Markov
process and the transition probabilities are $G$-invariant then we can pass
to the factor-process, i.e. the Markov process on the space $X/G$ of the
orbits of the group $G$.

In our case the state of the Markov process is the sequence $(x_{1}%
,x_{2},...,x_{N})$ where $x_{i}$ is a pair, consisting of the length of the
$i$-th queue and the elapsed service time $t_{i}$ for $i$-th server. The
orbit of this state can be interpreted as an atomic probability measure that
assigns the mass $1/N$ to each $x_{i}$ (empirical measure). So our Markov
process can be factorized to a process on empirical measures. The general
fact proved for a broad class of symmetric queueing systems is the
convergence of the process on empirical measures to the deterministic
evolution of measures $m(t)$ as $N\rightarrow\infty$ (and
$M\rightarrow\infty$ in our case), see \cite{KR}, \cite{RS05}, \cite{BRS}.

For given $N$ and $M$ the service process is ergodic. Indeed, let us register
the following event: all the customers are collected in the same queue and
the first one just starts its service. This renewal event will happen with
probability $1$ and with the finite mean waiting time. So the ergodicity
follows.

The equilibrium distribution $Q_{N,M}$ of this Markov process is symmetric,
i.e. the states of queues in equilibrium are exchangeable random variables.
Our problem is the study of asymptotic properties of this equilibrium
distribution $Q_{N,M}$ as $M$ and $N$ tend to infinity. (In the general case
there is no explicit formula for the joint distribution of queues as well as
for its marginal distributions.)

We want to find the conditions under which the limit
$\lim_{N\rightarrow\infty}Q_{N,aN}$ exists and has the `Propagation of Chaos'
(PoC) property. PoC property means that under $Q_{N,M}$ different nodes of
our network are asymptotically independent. We prove PoC in Section
\ref{PoC}. A similar but different case of PoC was addressed in
\cite{BLP1,BLP2}.

The PoC property for the stationary measure is a part of Strong Poisson
Hypothesis (SPH) formulated below. To formulate this hypothesis let us
consider a single server. Let the inflow to this server be a stationary
Poisson flow with intensity $\lambda<1$. The outflow is a stationary
(non-Poisson) flow with the same intensity $\lambda$.

The distribution of the state of the server is the stationary measure of
$M/G/1/\infty$ system with the input intensity $\lambda$.

The parameter $\lambda$ is found from the following argument: the expectation
of the length of the queue in the stationary $M/G/1/\infty$ system with the
input intensity $\lambda$ is equal to $a$.

SPH claims that in the stationary state in the limit as $N\rightarrow\infty$,
$M/N\rightarrow a$, the empirical measure (of server states
$x_{1},\dots,x_{N}$) tends weakly in probability to the distribution of the
state of the server described above. Moreover, all the servers in the limit
as $N\rightarrow\infty$ become asymptotically independent. For a non-random
service time this hypothesis was proved in a seminal paper by A. Stolyar
\cite{St1}.

Now we will remind the reader about the Non-Linear Markov Processes -- NLMP
-- which describe the limiting properties of the processes ${\mathcal{A}}%
_{N,M}(t)$ as $M,N\rightarrow\infty.$ This NLMP (in the sense of \cite{MK})
denoted below by ${\mathfrak A}(t)$ has the following structure. There is a
single server $M(t)/GI/1/\infty$ with a non-stationary Poisson input flow of
rate $\lambda(t)$. This rate equals the expected value of the output flow of
the server in the state corresponding to the measure $m(t)$, which is the
distribution of the state of the queue at time $t$. This equality defines the
measure $m(t)$ in a unique way if the initial measure $m(0)$ is given.

The theorem about the existence and uniqueness of NLMP was proved in
\cite{KR}. It was generalized to a broad class of symmetric queueing systems
in \cite{RS05}, \cite{BRS}. The convergence of empirical measures for any
finite time $t$ can be interpreted as a functional law of large numbers: on
any finite time interval $[0,T]$ the random evolution of empirical measures
converges in probability to the deterministic evolution $m(t)$ if the initial
empirical measure converges to $m(0)$.

From the convergence of empirical measures and from the general properties of
exchangeable random variables the Weak Poisson Hypothesis follows:

\textbf{WPH} \textit{Suppose that the initial distributions of queues is
symmetric and initial empirical measures converge in probability to the
measure }$m(0)$\textit{ as }$N\rightarrow\infty$\textit{ and }$M\rightarrow
\infty$\textit{. Then at any time }$t$\textit{ the servers are asymptotically
independent (i.e. PoC holds) and the distribution of the queue at any server
is close to the measure }$m(t)$\textit{ determined by the non-linear Markov
process.}

In comparison, the SPH in terms of empirical measures claims, that in the
stationary regime the empirical measures satisfies the law of large numbers,
i.e. converge to some non-random measure as $N\rightarrow\infty,$
$M\rightarrow\infty,$ $M/N\rightarrow a.$ In other words, the limiting
invariant measure of the Markov processes on empirical measures -- which is a
measure on measures -- is in fact concentrated at one single measure, as
$N\rightarrow\infty$, $M\rightarrow\infty$, $M/N\rightarrow a$.

From the general argument (known as Khasminski lemma, see, e.g. [L]) it
follows that if the sequence of Markov processes converges to some
deterministic evolution, then any limit point of the sequence of invariant
measures of these processes is an invariant measure of the limit dynamical
system. So in order to prove SPH it is good to know the invariant measures of
NLMP. This problem was considered in \cite{RS05}: for any given value of the
parameter $a$ -- the mean queue length -- the limiting dynamical system
(NLMP) $\mathfrak{A}(t)$ has a unique fixed point.

Summarizing, our strategy to prove the SPH for the measures $Q_{N,M}$ is the
following.

\begin{enumerate}
\item We check first the convergence of the processes
    ${\mathcal{A}}_{N,M}(t)$ to the NLMP $\mathfrak{A}(t)$ as
    $N\rightarrow\infty$ (this is the statement of WPH).

\item We check further that the dynamical system $\mathfrak{A}(t)$ has a
    unique stationary point, $\nu_{a}$, which is a global attractor on any
    ``leaf" where the mean queue length equals $a$.

\item We check, finally, that the limit points of the (precompact) family
    $Q_{N,M}$ as $N\rightarrow\infty$ and $\frac{M}{N}\rightarrow a,$
    which, in principle, could be mixtures of $\nu_{\tilde{a}}$-s with
    $\tilde{a}\leq a$, are in fact just the measure $\nu_{a}$ itself.
\end{enumerate}

The difficult part of the program is to show that the invariant measures of
the dynamical system (NLMP) are just fixed points. This was established in
\cite{RS05} in some cases, by using the flow smoothing property of
$M(t)/GI/1/\infty$ queueing system. This fact is not true in general: for some
symmetric queueing systems (with several types of servers and several types of
customers) there exist non-atomic invariant measures of the NLMP supported by
non-trivial attractors. In \cite{RSV} the corresponding example with $3$ types
of servers and $3$ types of customers was presented. In case of the simple
symmetric closed network considered here, an example of non-trivial attractor
(but with unbounded function $\beta(x)$) was constructed in \cite{RS08}.

At the end of the article we present a general scheme connecting the
asymptotic independence of exchangeable random variables with the law of
large numbers for empirical measures (a de Finetti-type theorem, see also
\cite{PP}).

\section{Single node}

\subsection{State space}\label{SS1}

The basic element of our model is a server with a queue. The customers arrive
to the queue and are served in the order of arrivals (FIFO service
discipline). The random service time $\eta$ of each customer is i.i.d. with
the distribution $F\left(  x\right)  $. The service discipline is FIFO with
i.i.d. service times with the distribution function $F(x)$, $0\leq x<\infty$.
This paper relies on our previous results concerning the PH, \cite{RS05},
which require some restrictions on $\eta.$ We list now the properties of
$\eta$ needed.

\begin{enumerate}
\item the density function $p\left(  t\right)  $ of random variable $\eta$
    is defined on $t\geq0$ and uniformly bounded from above; moreover, it
    is differentiable in $t,$ with $p^{\prime}\left(  t\right)  $
    continuous;

\item $p\left(  t\right)  $ satisfies the following strong Lipschitz
condition: for some $C<\infty$ and for all $t\geq0$
\begin{equation}
\left\vert p\left(  t+\Delta t\right)  -p\left(  t\right)  \right\vert \leq
Cp\left(  t\right)  \left\vert \Delta t\right\vert , \label{02}%
\end{equation}
provided $t+\Delta t>0$ and $\left\vert \Delta t\right\vert <1;$

\item for some $\delta>0$
\begin{equation}
M_{\delta}\equiv\mathbb{E}\left(  \eta\right)  ^{2+\delta}<\infty, \label{009}%
\end{equation}

\item defining the random variables
\[
\eta\Bigm|_{\tau}=\left(  \eta-\tau\Bigm|\eta>\tau\right)  ,\tau\geq0,
\]
and introducing the functions $p_{\tau}\left(  t\right)  $ as the densities of
the random variables $\eta\Bigm|_{\tau},$ we require that the function
$p_{\tau}\left(  0\right)  $ is bounded uniformly in $\tau\geq0,$
\begin{equation}
p_{\tau}\left(  0\right)  \leq\beta<\infty, \label{158}%
\end{equation}

\item the function $\frac{d}{d\tau}p_{\tau}\left(  0\right)  $ is continuous
and bounded uniformly in $\tau\geq0;$

\item the limits $\lim_{\tau\rightarrow\infty}p_{\tau}\left(  0\right)  ,$
$\lim_{\tau\rightarrow\infty}\frac{d}{d\tau}p_{\tau}\left(  0\right)  $ exist
and are finite.\medskip

\item Without loss of generality we suppose that
\begin{equation}
\mathbb{E}\left(  \eta\right)  =1. \label{153}%
\end{equation}

\end{enumerate}

In particular, power law decaying $\eta$-s are allowed.

As a state space ${\mathcal{Q}}$ of queues at a single server we take the set
of pairs $\left(  z,k\right)  ,$ where $z\geq0$ is the elapsed service time of
the customer under the service, $z\in\mathbb{R}^{1},$ and $k$ in an integer.
Of course, the empty state $\varnothing$ is also included in $\mathcal{Q}$ $.$
For the future use we introduce the subspace $\mathcal{Q}_{0}$ of
$\mathcal{Q}$ by%
\begin{equation}
q\in\mathcal{Q}_{0}\text{ iff }q=\left(  0,k\right)  \text{ for some }k\geq0.
\label{36}%
\end{equation}
In words, $\mathcal{Q}_{0}$ consists of queues, where the service of the first
customer is about ot start.

\subsection{Dynamics}

The dynamics is defined by the following simple relations. Suppose we are in a
state
\[
q(t)=(z\left(  t\right)  ,k\left(  t\right)  )\in\mathcal{Q}.
\]
While the time goes and nothing happens, $k$ stays constant, and $z$ grows
linearly: $\dot{z}(t)=1.$ If a customer arrives at the moment $t$, then we
have a jump:%
\[
(z\left(  t\right)  ,k\left(  t\right)  )\rightarrow(z\left(  t\right)
,k\left(  t\right)  +1).
\]
If a customer leaves at the moment $t$, then we have another jump:
\[
(z\left(  t\right)  ,k\left(  t\right)  )\rightarrow(0,k\left(  t\right)
-1).
\]

\section{Mean-field network}

\subsection{Definition}

The mean-field network consits of $N$ nodes, described above. Their collective
behavior is defined as follows. As soon as a customer finishes its service at
some node, it is routed to one of $N$ nodes with equal probability $1/N$ (this
is why the network is called a \emph{mean-field} network). At arrival to this
node the customer joins the queue and waits for its turn to be served. Thus,
the total number of customers,%
\[
M=\sum_{i=1}^{N}k_{i},
\]
is conserved by the dynamics. The resulting Markov process is denoted by
${\mathcal{A}}_{N,M}\left(  t\right)  .$

\subsection{Ergodicity}

For each pair $(N,M)$, we denote by $Q_{N,M}$ the unique equilibrium state of
the process ${\mathcal{A}}_{N,M}\left(  t\right)  $. This is a probability
measure on $\mathcal{Q}^{N}$.

A point $\left(  q_{1},...,q_{N}\right)  $ of the space $\mathcal{Q}^{N}$ can
be conviniently identified with a probability measure $\mu=\frac{1}{N}%
\sum_{n=1}^{N}\delta_{q_{i}}$ on $\mathcal{Q},$ i.e. with an element of
$\mathcal{M}\left(  \mathcal{Q}\right)  .$ In fact, it is an element of the
subspace $\mathcal{M}_{N}\left(  \mathcal{Q}\right)  \subset\mathcal{M}\left(
\mathcal{Q}\right)  $ of the atomic measures with atom weights equal to
$\frac{1}{N}.$ Hence the states of all the processes ${\mathcal{A}}%
_{N,M}\left(  t\right)  ,$ as well as the measures $Q_{N,M},$ are elements of
$\mathcal{M}\left(  \mathcal{M}_{N}\left(  \mathcal{Q}\right)  \right)  .$

\section{Some facts about NLMPs}

\subsection{Non-linear Markov processes}

The NLMPs $\mathfrak{A}_{a}$ are dynamical systems on $\mathcal{M}\left(
\mathcal{Q}\right)  .$ Under $\mathfrak{A}_{a},$ the measures evolve,
informally speaking, in the same way as under ${\mathcal{A}}_{N,aN}$ with $N$
very large. For the details of the NLMP see \cite{KR, RS05, BRS}.

Here is a description of the corresponding dynamical systems. Each of them
acts on the space of states $\mu\in\mathcal{M}\left(  \mathcal{Q}\right)  $ of
a single server. Every initial state $\mu\left(  0\right)  $ defines a certain
function $\lambda_{\mu\left(  0\right)  }(t)\geq0,$ which is the rate of the
arrival of the Poisson process of customers (to a single node). Once given,
the rate $\lambda_{\mu\left(  0\right)  }(t)-$Poissonian inflow defines the
evolution $\mu\left(  t\right)  $ of the state $\mu\left(  0\right)  .$

The definition of $\lambda(t)$ is somewhat complicated. Suppose we know
$\lambda(t).$ Then we know the exit flow (non-Poisson, in general) from our
server. This flow has some rate, $b(t),$ which is defined by $\mu\left(
t\right)  $ as follows. Given $\mu(t)$, let $\nu_{\mu(t)}$ be the probability
distribution of the remaining service time of$\;$ the customer currently
served. Then $b(t)=\lim_{\Delta\rightarrow0}\frac{\nu_{\mu(t)}([0,\Delta
])}{\Delta}.$

The function $\lambda_{\mu\left(  0\right)  }(t)$ is the solution of the
following (non-linear integral) equation:%
\[
\lambda_{\mu\left(  0\right)  }(t)=b(t).
\]
The subscript $a$ in the notation $\mathfrak{A}_{a}$ refers to the fact that
our dynamics conserves the average number $\mathcal{N}\left(  \mu\right)  $ of
customers:
\begin{equation}
\mathcal{N}\left(  \mu\right)  =\int_{\mathcal{Q}}k\ \mu d\left(  q\right)
=a;\ \ q=\left(  z,k\right)  \in\mathcal{Q}. \label{33}%
\end{equation}

In the following we will need one property of our NLMProcesses, which is
proven in Lemma 7 of \cite{RS05}.

\begin{lemma}
\label{Delta} Let $\Delta>0$ and the parameter $a$ of $\left(  \ref{33}%
\right)  $ is fixed. There exists a function $T\left(  \Delta,a\right)  ,$
such that for any $T>T\left(  \Delta,a\right)  $ and for any initial state
$\mu\left(  0\right)  $ satisfying $\left(  \ref{33}\right)  ,$ the
corresponding rate function $\lambda_{\mu\left(  0\right)  }$ satisfies the
estimate%
\begin{equation}
\int_{\tau}^{\tau+T}\lambda_{\mu\left(  0\right)  }\left(  t\right)
\ dt<T-\Delta\label{40}%
\end{equation}
for any $\tau>0.$
\end{lemma}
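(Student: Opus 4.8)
The plan is the following. First I would realize the non-linear Markov process $\mathfrak A_a$ started from $\mu(0)$ as a single tagged $M(t)/GI/1/\infty$ server fed by an inhomogeneous Poisson stream of the deterministic intensity $\lambda(t)=\lambda_{\mu(0)}(t)$, so that $\mu(t)$ is precisely the law of the server's state at time $t$; by its very definition this intensity equals the departure rate, $\lambda(t)=b(t)$, and the conservation law $(\ref{33})$ becomes $\mathbb E[k(t)]=a$ for all $t\ge0$. In particular $\int_\tau^{\tau+T}\lambda(t)\,dt$ is at once the expected number of arrivals and the expected number of departures on $[\tau,\tau+T]$.

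Next I would write the Skorokhod (Lindley) balance for the workload $V(t)$: since $V(\tau+T)=V(\tau)-\int_\tau^{\tau+T}\mathbf 1[k(t)\ge1]\,dt+\sum_{i}\eta_i$, the sum being over the arrivals in the window, and since the i.i.d.\ service times $\eta_i$ with $\mathbb E\eta=1$ are independent of the Poisson arrivals of deterministic intensity, Wald's identity yields the exact relation
\begin{equation*}
\int_\tau^{\tau+T}\lambda(t)\,dt \;=\; T-\int_\tau^{\tau+T}\mathbb P[k(t)=0]\,dt\;+\;\mathbb E[V(\tau+T)]-\mathbb E[V(\tau)].
\end{equation*}
Hence $(\ref{40})$ is equivalent to the lower bound
\begin{equation*}
\int_\tau^{\tau+T}\mathbb P[k(t)=0]\,dt\;-\;\mathbb E[V(\tau+T)]\;+\;\mathbb E[V(\tau)]\;>\;\Delta ,
\end{equation*}
that is, over a long window the expected idle time of the server must exceed $\Delta$ plus the increment of the expected workload.

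It then remains to prove two estimates, uniformly in $\tau>0$ and in $\mu(0)$ satisfying $(\ref{33})$. (i) A workload bound: using the moment condition $(\ref{009})$ and the uniform bound $(\ref{158})$ on the hazard rates $p_z(0)$, one shows that either $\mu(0)$ is ``regular'', in which case $\sup_{t\ge0}\mathbb E[V(t)]\le B(a)<\infty$, or else the age marginal of $\mu(0)$ is so heavy‑tailed that $b(\cdot)=\lambda(\cdot)$ stays small over a long initial stretch (the hazard rate at large age is small), in which case $\int_\tau^{\tau+T}\lambda$ already falls far below $T$ and $(\ref{40})$ holds directly; so one may assume $\mathbb E[V(t)]\le B(a)$ throughout, and in particular $\mathbb E[V(\tau)]\ge0$, $\mathbb E[V(\tau+T)]\le B(a)$. (ii) An idle‑time bound: $\int_\tau^{\tau+T}\mathbb P[k(t)=0]\,dt\ge c(a)\,T-C(a)$ for constants depending only on $a$. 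Granting these, the displayed lower bound is at least $c(a)T-C(a)-B(a)$, which exceeds $\Delta$ as soon as $T>T(\Delta,a):=(\Delta+C(a)+B(a))/c(a)$, proving the lemma.

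The main obstacle is estimate (ii): one must show the server cannot stay busy for too long. The mechanism is that a busy excursion of length $L$ forces, over that excursion, exactly $L$ (in expectation) departures and therefore $L$ arrivals while $\mathbb E[k(t)]$ is pinned at $a$; since the output flow of an $M(t)/GI/1/\infty$ queue is ``smoothed'' relative to the Poisson input — the property already exploited in \cite{RS05} — and since, by Markov's inequality, the queue carries $\Omega(L)$ customers with probability only $O(a/L)$, the excursion/renewal structure of the busy periods gives the claimed linear‑in‑$T$ lower bound on the expected idle time with constants controlled solely by $a$. The secondary difficulty — the non‑compactness of the admissible set of initial states $\mu(0)$ — is absorbed into the regular/irregular dichotomy of step (i).
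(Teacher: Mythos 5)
First, note that the paper does not prove this lemma at all: it is quoted as Lemma 7 of \cite{RS05}, and the hard analytic work lives there. Your attempt is therefore a genuinely new argument, and its bookkeeping step is fine: realizing $\mathfrak{A}_a$ as a single $M(t)/GI/1/\infty$ server with deterministic input rate $\lambda(t)=b(t)$, the workload identity $\int_\tau^{\tau+T}\lambda(t)\,dt = T-\int_\tau^{\tau+T}\mathbb{P}[k(t)=0]\,dt+\mathbb{E}[V(\tau+T)]-\mathbb{E}[V(\tau)]$ is a correct rearrangement (modulo integrability), and it correctly identifies that $\left(\ref{40}\right)$ amounts to producing at least $\Delta$ of expected idle time net of workload growth. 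But this identity only relocates the difficulty into your estimates (i) and (ii), and both of these are asserted rather than proved, and in the form stated they fail. For (ii), the mechanism is wrong: in the dangerous regime the lemma must exclude — input rate self-consistently close to the departure rate, i.e.\ a near-critical queue — a busy excursion of length $L$ does \emph{not} force $\Omega(L)$ customers in the queue; it forces $\Omega(L)$ of \emph{workload} processed, while the queue length typically fluctuates only diffusively, of order $\sqrt{L}$. So the Markov-inequality step "probability $O(a/L)$" collapses to at best $O(a/\sqrt{L})$, and turning that into a uniform idle-time bound requires precisely the second-moment/fluctuation analysis of the critically loaded $M(t)/GI/1$ queue (using $\left(\ref{009}\right)$) that constitutes the actual proof of Lemma 7 in \cite{RS05}. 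Moreover your claimed conclusion, idle time $\geq c(a)T-C(a)$ uniformly in $\tau$ and in all $\mu(0)$ with $\mathcal{N}(\mu(0))=a$, is stronger than what the lemma asserts and is not obviously true when $a\geq 1$; note also that the conservation $\mathbb{E}[k(t)]=a$ gives no leverage by itself, since it holds automatically whenever arrival and departure rates coincide, including for a hypothetical never-idle critical evolution.

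Estimate (i) is also unsupported under the paper's hypotheses. The workload contains the residual service time of the customer in service, and the assumptions allow power-law tails with only a $(2+\delta)$-moment, for which the mean residual life grows without bound in the elapsed age. Hence $\mathcal{N}(\mu(0))=a$ does not prevent $\mathbb{E}[V(t)]$ from being arbitrarily large (even infinite) at prescribed later times, if $\mu(0)$ charges huge elapsed ages; so $\sup_t\mathbb{E}[V(t)]\leq B(a)$ is not a consequence of the constraint, and since the term that must be bounded is $\mathbb{E}[V(\tau+T)]$ for \emph{arbitrary} $\tau$, the problem cannot be waved away as an "initial stretch" effect. The proposed dichotomy is not a dichotomy in any precise sense: under $\left(\ref{158}\right)$ the hazard rate is only bounded \emph{above}, "heavy-tailed age marginal" does not imply $\lambda(\cdot)$ is small on the relevant window (new arrivals have ordinary service times), and smallness of $\lambda$ near $t=0$ says nothing about windows with large $\tau$. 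In short, the skeleton is sensible, but the two inputs carrying all the content — the uniform workload bound and the uniform idle-time bound — are each essentially equivalent in depth to the lemma itself and are not established here; as it stands the argument has a genuine gap, which the paper sidesteps by invoking Lemma 7 of \cite{RS05}.
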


\subsection{Convergence to NLMP as $N\rightarrow\infty$}

Here we formulate the theorem about finite time convergence of processes
${\mathcal{A}}_{N,M}\left(  t\right)  $ to $\mathfrak{A}_{a}\left(  t\right)
,$ with $M/N\rightarrow a.$ More precisely, for any finite time interval
$[0,T]$, the evolution under ${\mathcal{A}}_{N,M}$ with the initial state
$\mu_{N}(0)$ converge to the evolution under $\mathfrak{A}_{a}\mathfrak{\ }%
$with the same initial state $\mu_{N}(0)$. The convergence here is the weak
convergence, i.e. the convergence of continuous functionals $\mathfrak{f}$ of
the trajectories $\left\{  \mu\left(  t\right)  ,t\in\left[  0,T\right]
\right\}  $.

\begin{theorem}
\label{T7} Let $T>0$ and let $\mathfrak{f}$ be a continuous functional on the
set of the trajectories $\left\{  \mu\left(  t\right)  ,t\in\left[
0,T\right]  \right\}  .$ Suppose that for any $N$ the number $M_{N}$ of
customers is chosen, in such a way that the the sequence $a_{N}=\frac{M_{N}%
}{N}$ has a limit. Then for any family of initial states $\left\{  \mu
_{N}(0)\in\mathcal{M}_{N}\left(  \mathcal{Q}\right)  ,N=1,2,...\right\}  $
with $M_{N}$ customers we have
\begin{equation}
\left\vert \mathfrak{f}\left(  {\mathcal{A}}_{N,M}\mu_{N}\right)
-\mathfrak{f}\left(  \mathfrak{A}_{a_{N}}\mu_{N}\right)  \right\vert
\rightarrow0\text{ as }N\rightarrow\infty, \label{34}%
\end{equation}
uniformly in $\left\{  \mu_{N}(0)\in\mathcal{M}_{N}\left(  \mathcal{Q}\right)
,N=1,2,...\right\}  .$
\end{theorem}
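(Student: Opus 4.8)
The plan is to prove this finite-time convergence by a coupling/propagation-of-chaos argument that tracks a single tagged node together with the empirical measure of the remaining $N-1$ nodes, following the lines of \cite{KR, RS05, BRS}. First I would reduce to the case where $a_N \to a$ is fixed (this is the stated hypothesis), and observe that $\mathcal{M}_N(\mathcal{Q})$ with the conservation constraint $\mathcal{N}(\mu_N(0)) = a_N$ is a precompact family, so that along any subsequence the initial empirical measures $\mu_N(0)$ have a weak limit $m(0)$ with $\mathcal{N}(m(0)) = a$; since the claimed estimate is uniform in the initial states, it suffices to control the trajectory for each such convergent sequence. Because $\mathfrak{f}$ is a bounded continuous functional on path space (say on $D([0,T], \mathcal{M}(\mathcal{Q}))$ with the Skorokhod topology), it is enough to prove that the law of the random trajectory $\{{\mathcal{A}}_{N,M}\mu_N(t)\}_{t\in[0,T]}$ converges weakly to $\delta$ concentrated on the deterministic NLMP trajectory $\{\mathfrak{A}_{a}\mu_N(t)\}$, i.e. that the empirical-measure process converges in probability to $m(\cdot)$.

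The core step is a martingale/Gronwall estimate. For a test function $\varphi$ on $\mathcal{Q}$ I would write the Dynkin formula for $\langle \mu_N(t), \varphi\rangle$ under ${\mathcal{A}}_{N,M}$: the generator has three pieces — the deterministic drift $\dot z = 1$ (the free transport term $\partial_z\varphi$), the departure jumps $(z,k)\mapsto(0,k-1)$ occurring at the hazard rate $p_z(0)$ at each busy node, and the arrival jumps $(z,k)\mapsto(z,k+1)$ where the arrival stream into a given node is the superposition of all departure streams thinned by $1/N$, so it has rate exactly equal to the empirical departure rate $b_N(t) = \langle \mu_N(t), \mathbf{1}_{k\ge 1}\, p_z(0)\rangle$. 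This is precisely the closed form that in the $N\to\infty$ limit becomes the NLMP fixed-point equation $\lambda(t) = b(t)$. The martingale part has quadratic variation $O(1/N)$ because the jump sizes in $\langle\mu_N,\varphi\rangle$ are $O(1/N)$ and the total jump rate is $O(N)$ (at most $N$ busy servers, each with bounded hazard by condition (4), $p_\tau(0)\le\beta$). Hence $\sup_{t\le T}|M_N^\varphi(t)| \to 0$ in $L^2$, and by Doob's inequality in probability, uniformly over the compact set of admissible initial conditions. Tightness of the laws of $\{\mu_N(\cdot)\}$ on path space follows from the $O(1/N)$ jump-size bound plus the bounded rates (Aldous's criterion), using that the moment bound \eqref{009} controls the queue-length coordinate so that mass does not escape to $k=\infty$.

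Finally I would identify the limit: any weak limit point $m(\cdot)$ of $\{\mu_N(\cdot)\}$ satisfies, by passing to the limit in the Dynkin formula and using continuity of $z\mapsto p_z(0)$ and its derivative (conditions (1), (5), (6)), the closed integral equation defining $\mathfrak{A}_a$ with initial condition $m(0)$; by the uniqueness theorem for the NLMP from \cite{KR} this limit is unique, hence the full sequence converges, and the limit trajectory coincides with $\mathfrak{A}_a\mu_N$ in the appropriate sense. Applying $\mathfrak{f}$ and using its continuity and boundedness gives \eqref{34}; uniformity in $\mu_N(0)$ comes from the compactness of the admissible-initial-condition set together with continuous dependence of the NLMP on its initial datum. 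The main obstacle I anticipate is the non-compactness of $\mathcal{Q}$ in the $k$-direction (queue lengths unbounded) and the consequent need to rule out loss of mass / tightness at infinity — this is where the $2+\delta$ moment condition \eqref{009} is essential, giving a uniform-integrability handle that closes the Gronwall estimate; a secondary technical point is handling the departure hazard $p_z(0)$ near $z=0$ and for heavy-tailed $\eta$, which is exactly what conditions (2)–(6) are designed to control.
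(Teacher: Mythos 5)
The paper does not actually prove Theorem \ref{T7}: its ``proof'' is the single line ``See \cite{KR}'', so there is no internal argument to compare yours with line by line. Your sketch is the standard mean-field route (Dynkin formula for $\langle\mu_N,\varphi\rangle$, $O(1/N)$ martingale quadratic variation, Aldous tightness, identification of limit points with the NLMP equation, uniqueness from \cite{KR}), and as a proof of finite-time convergence for a \emph{fixed} convergent sequence of initial data it is plausible and in the spirit of \cite{KR}.

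The genuine gap is in how you obtain the uniformity over initial states, which is the whole point of the statement as used later (the domination argument applies it to arbitrary states $\mu\in\mathcal{M}_N(\mathcal{Q})$). You reduce to convergent subsequences of $\mu_N(0)$ by asserting that the admissible family is precompact, but it is not: the constraint $\mathcal{N}(\mu_N(0))=a_N$ controls only the queue-length marginal, while the elapsed-service-time coordinate $z$ is unconstrained, so mass can escape to $z=\infty$ and no weakly convergent subsequence need exist (e.g.\ all servers with elapsed time $N^{10}$). Consequently ``compactness of the admissible-initial-condition set together with continuous dependence of the NLMP on its initial datum'' does not close the argument, and note also that the theorem compares ${\mathcal{A}}_{N,M}\mu_N$ with $\mathfrak{A}_{a_N}\mu_N$ started from the \emph{same}, $N$-dependent initial state, not with a single limiting trajectory $m(\cdot)$. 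To get the stated uniform bound you need a quantitative comparison whose constants do not depend on $\mu_N(0)$ --- e.g.\ a Sznitman-type coupling of the tagged node with its NLMP counterpart and a Gronwall estimate in which the error terms are controlled only through the uniform hazard bound $p_\tau(0)\le\beta$ of \eqref{158} and the moment condition \eqref{009}, not through any compactness of the initial data. Your martingale and tightness estimates are indeed uniform in the initial state, so the machinery can be salvaged, but the subsequence-extraction step as written would fail and must be replaced by such a direct uniform estimate (which is essentially what \cite{KR} provides).
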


\begin{proof}
See \cite{KR}. (Of course, the convergence in $\left(  \ref{34}\right)  $ is
not uniform in $T.$)
\end{proof}

Actually, we will only need the special case of this theorem, applied to
certain functionals $\mathfrak{f}_{n,T}.$ Let us consider our network of $N$
servers, and fix one of them, $s$. Consider the random variable $\mathcal{C}%
_{N,T,\mu},$ defined as the number of customers coming to the server $s$
during the time interval $\left[  0,T\right]  $ in the process ${\mathcal{A}%
}_{N,M}\mu$ (started from the state $\mu\in\mathcal{M}_{N}\left(
\mathcal{Q}\right)  $). In the same way we define the random variable
$\mathcal{C}_{T,\mu},$ as the number of customers coming to the server $s$
during the time interval $\left[  0,T\right]  $ in the NLMProcess
$\mathfrak{A}_{\frac{M}{N}}\mu.$

\begin{corollary}
\label{K} Under conditions of Theorem \ref{T7} we have: for each $n,T$
\begin{equation}
\left\vert \mathbf{\Pr}\left(  \mathcal{C}_{N,T,\mu}=n\right)  -\mathbf{\Pr
}\left(  \mathcal{C}_{T,\mu}=n\right)  \right\vert \rightarrow0 \label{50}%
\end{equation}
as $N\rightarrow\infty.$
\end{corollary}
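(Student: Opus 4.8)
I would deduce $(\ref{50})$ from Theorem \ref{T7} by encoding both probabilities through a single bounded continuous functional of the empirical trajectory. The plan has four steps: (i) choose the functional; (ii) check it returns $\mathbf{\Pr}(\mathcal{C}_{T,\mu}=n)$ along the NLMP exactly; (iii) show it returns $\mathbf{\Pr}(\mathcal{C}_{N,T,\mu}=n)$ up to $o(1)$ along ${\mathcal A}_{N,M}$; (iv) quote Theorem \ref{T7}. For (i)--(ii): let $h(z)=p(z)/(1-F(z))$ be the hazard rate of the service time, so $h(z)=p_{z}(0)\leq\beta$ by $(\ref{158})$, and put $\mathfrak{b}(\mu)=\int_{\mathcal{Q}}h(z)\,\mathbf{1}_{\{k\geq1\}}\,\mu(d(z,k))$; since $(z,k)\mapsto h(z)\mathbf{1}_{\{k\geq1\}}$ is bounded and continuous on $\mathcal{Q}$, the map $\mu\mapsto\mathfrak{b}(\mu)$ is weakly continuous, hence $\Lambda_{T}(\omega)=\int_{0}^{T}\mathfrak{b}(\mu(t))\,dt$ is a continuous functional of the trajectory $\omega=\{\mu(t):t\in[0,T]\}$, with $0\leq\Lambda_{T}\leq\beta T$. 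I would then take $\mathfrak{f}_{n,T}(\omega)=e^{-\Lambda_{T}(\omega)}\Lambda_{T}(\omega)^{n}/n!$. Along the NLMP the arrival flow at $s$ is, by the definition of $\mathfrak{A}_{a}$ recalled above, Poisson of rate $\lambda(t)=b(t)=\mathfrak{b}(\mu(t))$, so $\mathcal{C}_{T,\mu}$ is Poisson with parameter $\Lambda_{T}(\mathfrak{A}_{M/N}\mu)$ and $\mathbf{\Pr}(\mathcal{C}_{T,\mu}=n)=\mathfrak{f}_{n,T}(\mathfrak{A}_{M/N}\mu)$ identically.

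For (iii) I would proceed as follows. In the $N$-server network write $\omega_{N}$ for the random trajectory ${\mathcal A}_{N,M}\mu$, let $D$ be the number of service completions on $[0,T]$, and let $R_{1},R_{2},\dots$ be the successive routing targets, which are i.i.d.\ uniform on $\{1,\dots,N\}$, so that $\mathcal{C}_{N,T,\mu}=\sum_{j=1}^{D}\mathbf{1}_{\{R_{j}=s\}}$. The completion counting process has predictable intensity $\sum_{i}h(z_{i}(t))\mathbf{1}_{\{k_{i}(t)\geq1\}}=N\,\mathfrak{b}(\mu_{N}(t))$, i.e.\ compensator $N\Lambda_{T}(\omega_{N})$, so the associated martingale has $\mathbb{E}[(D-N\Lambda_{T}(\omega_{N}))^{2}]\leq\beta TN$, whence $D/N-\Lambda_{T}(\omega_{N})\to0$ in $L^{2}$, uniformly in $\mu$; combined with Theorem \ref{T7} applied to the bounded continuous functional $\Lambda_{T}$ this gives $D/N\to\ell_{N}:=\Lambda_{T}(\mathfrak{A}_{M/N}\mu)$ in probability, uniformly in $\mu$. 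I would then couple $\mathcal{C}_{N,T,\mu}$ with $\mathrm{Bin}(\lfloor N\ell_{N}\rfloor,1/N)=\sum_{j\leq\lfloor N\ell_{N}\rfloor}\mathbf{1}_{\{R_{j}=s\}}$, using the same routing sequence: on the event $\{|D-N\ell_{N}|\leq\varepsilon N\}$ the two sums differ by at most the number of indices $j$ in the \emph{fixed} window $(N\ell_{N}-\varepsilon N,\,N\ell_{N}+\varepsilon N]$ with $R_{j}=s$, and the latter is nonzero with probability at most $2\varepsilon$; letting $N\to\infty$ and then $\varepsilon\to0$ shows the total variation distance between $\mathcal{C}_{N,T,\mu}$ and $\mathrm{Bin}(\lfloor N\ell_{N}\rfloor,1/N)$ tends to $0$, uniformly in $\mu$. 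By Le~Cam's inequality $\mathrm{Bin}(\lfloor N\ell_{N}\rfloor,1/N)$ lies within $\beta T/N$ of $\mathrm{Poisson}(\ell_{N})$ in total variation, hence $\mathbf{\Pr}(\mathcal{C}_{N,T,\mu}=n)=e^{-\ell_{N}}\ell_{N}^{n}/n!+o(1)=\mathfrak{f}_{n,T}(\mathfrak{A}_{M/N}\mu)+o(1)$, uniformly in $\mu$. With (ii), this is $(\ref{50})$.

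The routine ingredients here are the martingale second-moment bound, Le~Cam's Poisson approximation, and the weak continuity of $\Lambda_{T}$. The step I expect to require the most care is the coupling in the second paragraph: one must \emph{not} condition on the whole empirical trajectory $\omega_{N}$ (conditionally on $\omega_{N}$ the law of $\mathcal{C}_{N,T,\mu}$ is not $\mathrm{Bin}(D,1/N)$, because $\omega_{N}$ pins down the state of the server that receives each arrival and hence correlates successive arrivals at $s$), but instead exploit that the raw routing sequence $(R_{j})_{j\geq1}$ is i.i.d.\ uniform while the completion count $D$ concentrates around the \emph{deterministic} number $N\ell_{N}$. The uniformity in $\mu$ in $(\ref{50})$ is inherited from the uniformity in Theorem \ref{T7} together with the $\mu$-free constants in the martingale and Poisson estimates.
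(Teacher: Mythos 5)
Your proposal is correct in substance, but it is a much more self-contained argument than what the paper actually does: the paper offers no proof of Corollary \ref{K} at all, merely introducing the counts $\mathcal{C}_{N,T,\mu}$, $\mathcal{C}_{T,\mu}$ and presenting the corollary as "the special case of Theorem \ref{T7} applied to certain functionals $\mathfrak{f}_{n,T}$", i.e.\ delegating the real content to the convergence results of \cite{KR}. You identify what those functionals must be ($\mathfrak{f}_{n,T}=e^{-\Lambda_T}\Lambda_T^n/n!$ with $\Lambda_T$ the integrated empirical hazard) and, more importantly, you supply the step that Theorem \ref{T7} alone does not formally give: $\mathbf{\Pr}(\mathcal{C}_{N,T,\mu}=n)$ is \emph{not} a functional of the empirical trajectory, since it refers to a tagged server, and your martingale bound for the total completion count $D$ plus the coupling of $\sum_{j\le D}\mathbf{1}_{\{R_j=s\}}$ with a binomial over the deterministic window $\lfloor N\ell_N\rfloor$ (exploiting that the raw routing sequence is i.i.d.\ uniform, exactly as you flag) is the correct bridge; Le Cam then converts the binomial into the Poisson value $\mathfrak{f}_{n,T}(\mathfrak{A}_{M/N}\mu)$, which matches $\mathbf{\Pr}(\mathcal{C}_{T,\mu}=n)$ because the NLMP input is by definition Poisson of rate $\lambda(t)=b(t)$. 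Two small points to tighten: deducing convergence in probability of $\Lambda_T(\omega_N)$ to $\ell_N$ from Theorem \ref{T7} requires applying the theorem not just to $\Lambda_T$ but, e.g., to $\Lambda_T$ and $\Lambda_T^2$ (or to $g\circ\Lambda_T$ for bounded continuous $g$), since the theorem compares (expectations of) functionals; and the rounding from $\mathrm{Poisson}(\lfloor N\ell_N\rfloor/N)$ to $\mathrm{Poisson}(\ell_N)$ costs an extra $O(1/N)$ in total variation, which should be stated. Neither affects the conclusion; what your route buys is a proof of (\ref{50}) that does not lean on the per-server (propagation-of-chaos level) convergence proved in \cite{KR}, at the price of assuming the hazard-rate representation of the completion intensity, which the paper's hypotheses on $p_\tau(0)$ indeed justify.
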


\subsection{Convergence of NLMP-s as T$\rightarrow\infty$}

In this section we formulate the convergence properties of our NLMProcesses,
which will be crucially used later. In words, they state that the trajectory
of our dynamical systems $\left(  \mathfrak{A}_{a}\mu\right)  \left(
t\right)  $ go to the limit, as $t\rightarrow\infty$ (and not to some more
complicated limit set). This statement is the content of the main Theorem 1 of
\cite{RS05}.

\begin{proposition}
Suppose that the measure $\mu$ on $\mathcal{Q}$ satisfies

$i)$ $\mathcal{N}\left(  \mu\right)  =a,$

$ii)$ $\mu$ is supported by $\mathcal{Q}_{0}\subset\mathcal{Q}.$

Then the limit $\lim_{t\rightarrow\infty}\left(  \mathfrak{A}_{a}\mu\right)
\left(  t\right)  $ exists and equals to the measure $\nu_{a},$ which is the
unique stationary point of the evolution $\mathfrak{A}_{a}.$
\end{proposition}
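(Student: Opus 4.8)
The plan is to reduce the general initial condition $\mu$ supported on $\mathcal{Q}_0$ to the statement already established in Theorem 1 of \cite{RS05}, i.e. to use the cited proposition essentially as a black box, but to organize the argument so that the two hypotheses $i)$ and $ii)$ are exactly the ones under which that black box applies. Concretely, the cited result asserts that $\mathfrak{A}_a$ has a unique stationary point $\nu_a$ on the leaf $\{\mathcal{N}(\mu)=a\}$ and that it is a global attractor \emph{for initial data in $\mathcal{Q}_0$}. So the only real content here is to check that the hypotheses are not vacuous and that the conclusion is the one we want; there is essentially nothing new to prove beyond invoking \cite{RS05}.

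First I would recall why restricting to $\mathcal{Q}_0$ is natural and not a loss of generality for our purposes: a state $\mu$ supported on $\mathcal{Q}_0$ is one in which every server has elapsed service time $z=0$, i.e. every server is about to begin service of its head-of-line customer. This is the set of states reached right after a ``synchronized restart'', and the flow-smoothing property of $M(t)/GI/1/\infty$ (the mechanism underlying Lemma \ref{Delta}) implies that after a finite time the rate function $\lambda_{\mu(0)}$ is uniformly bounded away from $1$ in the averaged sense of \eqref{40}. This is precisely the input that makes the dynamical system $\mathfrak{A}_a$ relax rather than wander on a nontrivial attractor (recall that for unbounded $\beta(x)$ such nontrivial attractors do exist, per \cite{RS08}; condition $(4)$, equation \eqref{158}, rules this out).

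Next, the substantive steps, in order: (a) verify that $\mathcal{N}(\mu)$ is conserved along $\mathfrak{A}_a\mu$, so that the whole trajectory stays on the leaf $\{\mathcal{N}=a\}$ — this is immediate from the definition of the NLMP, since customers are neither created nor destroyed and the conservation law \eqref{33} is built in; (b) invoke Lemma \ref{Delta} to get, for every $\Delta>0$, a time $T(\Delta,a)$ after which the arrival rate satisfies $\int_\tau^{\tau+T}\lambda_{\mu(0)}(t)\,dt < T-\Delta$ for all $\tau$, which gives the uniform stability estimate needed to apply the convergence theorem of \cite{RS05}; (c) apply Theorem 1 of \cite{RS05} verbatim: under the conservation law and the support condition, the $\omega$-limit set of $\mathfrak{A}_a\mu$ is a single point, and by uniqueness of the stationary point that point must be $\nu_a$; (d) conclude $\lim_{t\to\infty}(\mathfrak{A}_a\mu)(t)=\nu_a$.

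The main obstacle, if one were to reprove rather than cite, is step (c): showing that the $\omega$-limit set cannot be a nontrivial invariant set (a periodic orbit or a more complicated attractor) inside the leaf. The proof in \cite{RS05} handles this via the flow-smoothing / monotonicity argument, exploiting the fact that repeated passage of the flow through an $M(t)/GI/1/\infty$ queue contracts the relevant distances on measures; the uniform bound $p_\tau(0)\le\beta$ from \eqref{158} is what prevents the degeneracy seen in \cite{RS08}. Since this is exactly the content of Theorem 1 of \cite{RS05}, for the present paper I would simply cite it and restrict attention to verifying that the hypotheses $i)$ and $ii)$ are met — which, as noted, is routine. $\blacksquare$
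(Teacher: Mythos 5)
Your proposal matches the paper's treatment: the paper gives no independent proof but simply cites Theorem 1 of \cite{RS05}, noting only that the extra condition (condition (20) of that paper) required there holds evidently for initial measures supported by $\mathcal{Q}_{0}$, which is exactly the hypothesis-verification role you assign to $i)$ and $ii)$. So your argument is correct and takes essentially the same route as the paper.
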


Note that the Theorem 1 of \cite{RS05} can be applied only to the initial
measures $\mu$ which satisfy an extra condition (20) of the paper \cite{RS05}.
But for the measures supported by $\mathcal{Q}_{0}$ it holds evidently.

The stationary measures $\nu_{a}$ satisfy $\mathcal{N}\left(  \nu_{a}\right)
=a;$ they are uniquely defined by the random service time distribution $\eta.$

\section{Convergence $Q_{N,M}\rightarrow\nu_{a}$}

Here we will prove the convergence:
\begin{equation}
\text{if }\lim_{N\rightarrow\infty}\frac{M_{N}}{N}=a,\text{ then }%
\lim_{N\rightarrow\infty}Q_{N,M_{N}}=\nu_{a},\label{52}%
\end{equation}
where the equilibrium measures $\nu_{a}$ were introduced in the previous
section. Since the expectations $\mathcal{N}\left(  Q_{N,M_{N}}\right)
\rightarrow a$, the family $Q_{N,M_{N}}$ is compact. Therefore to prove
$\left(  \ref{52}\right)  $ it is enough to show that for every limit point
$\lim_{n\rightarrow\infty}Q_{N_{n},M_{N_{n}}}$ of the family $Q_{N,M_{N}}$ we
have
\begin{equation}
\mathcal{N}\left(  \lim_{n\rightarrow\infty}Q_{N_{n},M_{N_{n}}}\right)
=a\label{53}%
\end{equation}
(and so the stationary measure $\lim_{n\rightarrow\infty}Q_{N_{n},M_{N_{n}}}$
of the process $\mathfrak{A}$ is $\nu_{a}.$) In general one can claim only
that $\mathcal{N}\left(  \lim_{n\rightarrow\infty}Q_{N_{n},M_{N_{n}}}\right)
\leq a,$ but we are going to define a process $\mathfrak{B}_{T}$ which
dominates all the processes ${\mathcal{A}}_{N,M}$ with $N$ large enough, as
well as their stationary states $Q_{N,M}.$ Since its stationary distribution
$\mathfrak{\bar{B}}$ has finite expected number of customers $\mathcal{N}%
\left(  \mathfrak{\bar{B}}\right)  ,$ the family $Q_{N,M}$ is uniformly
integrable, so $\left(  \ref{53}\right)  $ follows.

In order to define the process $\mathfrak{B}_{T}$  we will use the notion of
measure dominance and we introduce some notations.

Let $\xi,\zeta$ be two probability measures on $\mathbb{Z}_{+}^{1}=\left\{
0,1,2,...\right\}  .$

\begin{enumerate}
\item We say that $\zeta$ dominates $\xi,$ i.e. $\xi\preceq\zeta$ iff for any
$n>0$
\begin{equation}
\xi\left(  \left[  0,n\right]  \right)  \geq\zeta\left(  \left[  0,n\right]
\right)  . \label{49}%
\end{equation}
(In words, $\zeta$ is to the right of $\xi.$)

\item For $l>0$ we say that $\xi\preceq_{l}\zeta$ iff $\left(  \ref{49}%
\right)  $ holds for $n\leq l.$
\end{enumerate}

For any $\xi\preceq\zeta$ and every $k\in\mathbb{Z}_{+}^{1}$ we now define
measures $\xi\diamond_{k}\zeta,$ , so that $\zeta=\xi\diamond_{0}\zeta
\succeq\xi\diamond_{1}\zeta\succeq...\succeq\xi.$ The probability measure
$\xi\diamond_{k}\zeta$ is uniquely defined by the properties:

\begin{enumerate}
\item $\xi\diamond_{k}\zeta\preceq\zeta,$

\item for every $n<k$ we have $\left(  \xi\diamond_{k}\zeta\right)  \left(
\left[  0,n\right]  \right)  =\xi\left(  \left[  0,n\right]  \right)  ,$

\item for every $n>K\left(  \xi,\zeta\right)  >k$ we have $\left(  \xi
\diamond_{k}\zeta\right)  \left(  [n,+\infty)\right)  =\zeta\left(  \lbrack
n,+\infty)\right)  ,$ where the integer $K\left(  \xi,\zeta\right)  $ satisfies

\item $\left(  \xi\diamond_{k}\zeta\right)  \left(  \left[  k,K\left(
\xi,\zeta\right)  -1\right]  \right)  =0.$

[\textbf{Comment. }The last relation defines the value $\left(  \xi
\diamond_{k}\zeta\right)  \left(  K\left(  \xi,\zeta\right)  \right)  $ to be
equal to $1-\xi\left(  \left[  0,k-1\right]  \right)  -\zeta\left(  \lbrack
K\left(  \xi,\zeta\right)  +1,+\infty)\right)  .$ This value does not exceed
$\zeta\left(  K\left(  \xi,\zeta\right)  \right)  .\ $]
\end{enumerate}

With these notations we have a simple lemma:

\begin{lemma}
\label{mix} Suppose that $\xi\preceq\zeta,$ and for a random variable
$\varkappa\geq0$ and some $k>0$ we have:%

\[
\varkappa\preceq\zeta\text{ and }\varkappa\preceq_{k}\xi.
\]

Then%
\[
\varkappa\preceq\xi\diamond_{k+1}\zeta
\]

\end{lemma}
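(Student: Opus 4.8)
The plan is to verify the three defining properties of the measure $\xi\diamond_{k+1}\zeta$ directly against $\varkappa$. Recall that by construction $\zeta=\xi\diamond_{0}\zeta\succeq\xi\diamond_{1}\zeta\succeq\dots\succeq\xi$, so $\xi\diamond_{k+1}\zeta$ is a probability measure that agrees with $\xi$ on the interval $[0,k]$ (property 2 with $n<k+1$, i.e. $n\le k$), sits below $\zeta$ everywhere (property 1), and above some threshold $K=K(\xi,\zeta)$ it coincides with $\zeta$ on the tail (property 3), with all its mass on $[0,k]\cup\{K\}$ in between (property 4). To show $\varkappa\preceq\xi\diamond_{k+1}\zeta$ I must check that $\varkappa([0,n])\ge(\xi\diamond_{k+1}\zeta)([0,n])$ for every $n\ge0$, and I will split this into the three ranges $n\le k$, $k<n<K$, and $n\ge K$.

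First I would handle $n\le k$: here $(\xi\diamond_{k+1}\zeta)([0,n])=\xi([0,n])$ by the second defining property, and the hypothesis $\varkappa\preceq_{k}\xi$ gives exactly $\varkappa([0,n])\ge\xi([0,n])$ for $n\le k$. Next, for $k<n<K$: by the fourth defining property $\xi\diamond_{k+1}\zeta$ places no mass on $[k+1,K-1]$, so $(\xi\diamond_{k+1}\zeta)([0,n])=(\xi\diamond_{k+1}\zeta)([0,k])=\xi([0,k])$, and again $\varkappa([0,n])\ge\varkappa([0,k])\ge\xi([0,k])$ using monotonicity of $\varkappa([0,\cdot])$ together with $\varkappa\preceq_k\xi$. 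Finally, for $n\ge K$: by the third defining property $(\xi\diamond_{k+1}\zeta)([n,+\infty))=\zeta([n,+\infty))$ for $n>K$ — and at $n=K$ one checks from the Comment that the total mass forces $(\xi\diamond_{k+1}\zeta)([0,K])\le\zeta([0,K])$ as well — so $(\xi\diamond_{k+1}\zeta)([0,n])\le\zeta([0,n])$, and the hypothesis $\varkappa\preceq\zeta$ yields $\varkappa([0,n])\ge\zeta([0,n])\ge(\xi\diamond_{k+1}\zeta)([0,n])$. Combining the three ranges gives $\varkappa\preceq\xi\diamond_{k+1}\zeta$.

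The only slightly delicate point is the endpoint $n=K$ itself, where the intermediate regime and the tail regime meet: one must confirm that the single atom of $\xi\diamond_{k+1}\zeta$ at $K$, whose weight the Comment identifies as $1-\xi([0,k])-\zeta([K+1,+\infty))$, does not push $(\xi\diamond_{k+1}\zeta)([0,K])$ above $\zeta([0,K])$. Since that weight is at most $\zeta(K)$ (as noted in the Comment) and $(\xi\diamond_{k+1}\zeta)([0,K-1])=\xi([0,k])\le\zeta([0,K-1])$ from $\xi\preceq\zeta$, this is immediate; this is the main (and essentially only) obstacle, and it is routine. I expect no further difficulty — the lemma is a bookkeeping statement about where the "reshuffled" mass of $\xi\diamond_{k+1}\zeta$ lives, and the three hypotheses on $\varkappa$ are precisely tailored to the three regions.
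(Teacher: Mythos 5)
The paper gives no written proof of Lemma \ref{mix} (it is stated with only an end-of-proof mark), so the question is simply whether your argument stands on its own. Your three-range verification of $\varkappa\left(\left[0,n\right]\right)\geq\left(\xi\diamond_{k+1}\zeta\right)\left(\left[0,n\right]\right)$ -- using $\varkappa\preceq_{k}\xi$ for $n\leq k$, the absence of mass of $\xi\diamond_{k+1}\zeta$ on $\left[k+1,K-1\right]$ together with monotonicity of $\varkappa\left(\left[0,\cdot\right]\right)$ in the middle range, and the agreement of the tail of $\xi\diamond_{k+1}\zeta$ with that of $\zeta$ beyond $K$ combined with $\varkappa\preceq\zeta$ for large $n$ -- is exactly the natural bookkeeping argument, and it works.

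One justification, however, is incorrect, even though the step it supports is true. At $n=K$ you assert $\xi\left(\left[0,k\right]\right)\leq\zeta\left(\left[0,K-1\right]\right)$ ``from $\xi\preceq\zeta$''. Domination only compares the two measures at the same $n$ (and in the direction $\xi\left(\left[0,n\right]\right)\geq\zeta\left(\left[0,n\right]\right)$); moreover the definition of $K\left(\xi,\zeta\right)$ forces the opposite inequality, $\zeta\left(\left[0,K-1\right]\right)\leq\xi\left(\left[0,k\right]\right)\leq\zeta\left(\left[0,K\right]\right)$, which is precisely what makes the atom at $K$ nonnegative and at most $\zeta\left(K\right)$. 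The endpoint actually requires no delicacy: by the Comment, $\left(\xi\diamond_{k+1}\zeta\right)\left(K\right)=1-\xi\left(\left[0,k\right]\right)-\zeta\left(\left[K+1,+\infty\right)\right)$, hence
\[
\left(\xi\diamond_{k+1}\zeta\right)\left(\left[0,K\right]\right)=\xi\left(\left[0,k\right]\right)+\left(\xi\diamond_{k+1}\zeta\right)\left(K\right)=1-\zeta\left(\left[K+1,+\infty\right)\right)=\zeta\left(\left[0,K\right]\right),
\]
an equality; equivalently, for every $n\geq K$ one has $n+1>K$, so property 3 already gives $\left(\xi\diamond_{k+1}\zeta\right)\left(\left[0,n\right]\right)=\zeta\left(\left[0,n\right]\right)$, and $\varkappa\preceq\zeta$ finishes that range, $n=K$ included. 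With this one-line replacement your proof is complete.
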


$\blacksquare$

Now we will construct a stationary process which dominates all the processes
${\mathcal{A}}_{N,M}$ with $N$ large enough, as well as their stationary
states $Q_{N,M}.$

Let us fix some value $\Delta>0$ (compare with Lemma \ref{Delta}; for example,
$\Delta=1$ would go), and fix some $T>T\left(  \Delta,a\right)  .$ Consider
the discrete random variable $\chi_{T-\frac{\Delta}{2}},$ which has Poisson
distribution with parameter $T-\frac{\Delta}{2}.$ Let us pick a small positive
$\varepsilon>0,$ to be specified later, and define the integer $K$ as the one
satisfying%
\[
\mathbf{\Pr}\left(  \chi_{T-\frac{\Delta}{2}}>K\right)  <\varepsilon.
\]
According to the theorem \ref{T7}, its corollary \ref{K} and lemma \ref{Delta}
we know that for all $N$ large enough and for any initial state $\mu
\in\mathcal{M}_{N}\left(  \mathcal{Q}\right)  $we have%
\[
\mathcal{C}_{N,T,\mu}\preceq_{K}\chi_{T-\frac{\Delta}{2}}.
\]
We have also a straghtforward relation%
\[
\mathcal{C}_{N,T,\mu}\preceq\chi_{T\beta},
\]
see $\left(  \ref{158}\right)  .$ Applying lemma \ref{mix}, we conclude that
\begin{equation}
\mathcal{C}_{N,T,\mu}\preceq\chi_{T-\frac{\Delta}{2}}\diamond_{K+1}%
\chi_{T\beta}, \label{78}%
\end{equation}
(provided $\beta>1$). What is very important for us is that
\begin{equation}
\mathbb{E}\left(  \chi_{T-\frac{\Delta}{2}}\diamond_{K+1}\chi_{T\beta}\right)
\leq\left(  T-\frac{\Delta}{2}\right)  +\varepsilon T\beta<T-\frac{\Delta}{4}
\label{79}%
\end{equation}
once $\varepsilon$ is small enough.

Consider now the random queueing process $\mathfrak{B}_{T}$, when the
customers are arriving in groups only at discrete moments $kT,$ $k=0,1,2,...$,
while the number of customers in groups is iid, with distribution
$\chi_{T-\frac{\Delta}{2}}\diamond_{K+1}\chi_{T\beta}.$ Because of $\left(
\ref{79}\right)  ,$ the process $\mathfrak{B}_{T}$ is ergodic, and because of
$\left(  \ref{78}\right)  $ it dominates all the processes ${\mathcal{A}%
}_{N,M},$ see \cite{BF}. Therefore the stationary distribution $\mathfrak{\bar
{B}}$ of $\mathfrak{B}_{T}$ dominates all the states $Q_{N,M},$ and we are done.

\section{Propagation of Chaos}\label{PoC}

Here we prove finally the Propagation of Chaos property: under $Q_{N,M},$
different nodes of our network are asymptotically independent.

This result follows from the general theorem we will present now. Let $k$ be
fixed, and suppose that for every set of integers $n_{1},...,n_{k}$ a
collection of random variables $\xi_{1}^{1},...,\xi_{n_{1}}^{1},\xi_{1}%
^{2},...,\xi_{n_{2}}^{2},\xi_{1}^{k},...,\xi_{n_{k}}^{k}$ is given. Suppose
that the joint distribution $P_{n_{1},...,n_{k}}$ of this collection is
invariant under the action of the product of the permutation groups $S_{n_{1}%
}\times...\times S_{n_{k}},$ where each group $S_{n_{i}}$ permutes the random
variables $\xi_{1}^{i},...,\xi_{n_{i}}^{i}.$ Suppose that for each $i=1,...,k$
the Law of Large Numbers (LLN) holds for $\xi_{1}^{i},...,\xi_{n_{i}}^{i}$,
which means that for every bounded measurable function $f$ we have that the
average
\[
\frac{1}{n_{i}}\sum_{j=1}^{n_{i}}f\left(  \xi_{j}^{i}\right)  \rightarrow
\mu^{i}\left(  f\right)
\]
in probability, where $\mu^{i}\left(  \ast\right)  $ is some (non-random)
functional. Then the collection $\xi_{1}^{1},...,\xi_{n_{1}}^{1},\xi_{1}%
^{2},...,\xi_{n_{2}}^{2},\xi_{1}^{k},...,\xi_{n_{k}}^{k}$ is asymptotically independendent:

\begin{theorem}
For any $m_{1},...,m_{k}$ and any collection $f_{j}^{i}$ of bounded measurable
functions, $j=1,...,m_{i},\ i=1,...,k,$ the expectation
\begin{equation}
\mathbb{E}_{P_{n_{1},...,n_{k}}}\left(  \prod_{i=1}^{k}\prod_{j=1}^{m_{j}%
}f_{j}^{i}\left(  \xi_{j}^{i}\right)  \right)  \rightarrow\prod_{i=1}^{k}%
\prod_{j=1}^{m_{j}}\mu^{i}\left(  f_{j}^{i}\right)  \label{01}%
\end{equation}
when all $n_{i}\rightarrow\infty.$ Also,%
\[
\prod_{i=1}^{k}\prod_{j=1}^{m_{j}}\mathbb{E}_{P_{n_{1},...,n_{k}}}\left[
f_{j}^{i}\left(  \xi_{j}^{i}\right)  \right]  \rightarrow\prod_{i=1}^{k}%
\prod_{j=1}^{m_{j}}\mu^{i}\left(  f_{j}^{i}\right)  .
\]

\end{theorem}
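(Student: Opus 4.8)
The plan is to reduce the multi-block statement to the single-block case, and then prove the single-block case by a de Finetti-type argument using the LLN hypothesis. First I would observe that it suffices to treat $k=1$: since the joint distribution $P_{n_1,\dots,n_k}$ is invariant under $S_{n_1}\times\cdots\times S_{n_k}$, for each fixed $i$ the variables $\xi^i_1,\dots,\xi^i_{n_i}$ are exchangeable, and the product structure of the conclusion $\prod_i\prod_j\mu^i(f^i_j)$ suggests handling the blocks one at a time and appealing to a telescoping/product argument once each block decouples. So the core is: if $\xi_1,\dots,\xi_n$ are exchangeable random variables (with some abstract state space) such that $\frac1n\sum_{j=1}^n f(\xi_j)\to\mu(f)$ in probability for every bounded measurable $f$, then $\mathbb{E}\bigl(\prod_{j=1}^m f_j(\xi_j)\bigr)\to\prod_{j=1}^m\mu(f_j)$ as $n\to\infty$.

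For the single-block case I would argue as follows. Fix $m$ and bounded measurable $f_1,\dots,f_m$ with $\|f_j\|_\infty\le 1$ say. By exchangeability, for $n\ge m$,
\[
\mathbb{E}\Bigl(\prod_{j=1}^m f_j(\xi_j)\Bigr)
=\frac{(n-m)!}{n!}\sum_{\substack{(a_1,\dots,a_m)\\ \text{distinct}}}\mathbb{E}\Bigl(\prod_{j=1}^m f_j(\xi_{a_j})\Bigr),
\]
which up to an error of order $O(m^2/n)$ (coming from collisions among indices) equals
\[
\mathbb{E}\Bigl(\prod_{j=1}^m \frac1n\sum_{a=1}^n f_j(\xi_a)\Bigr).
\]
Now denote $S^{(j)}_n=\frac1n\sum_{a=1}^n f_j(\xi_a)$. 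Each $S^{(j)}_n$ is bounded by $1$ and converges in probability to the constant $\mu(f_j)$ by hypothesis. Since the $S^{(j)}_n$ are uniformly bounded and each converges in probability to a constant, their product converges in probability to $\prod_j\mu(f_j)$, and by bounded convergence $\mathbb{E}\bigl(\prod_j S^{(j)}_n\bigr)\to\prod_j\mu(f_j)$. Combining with the $O(m^2/n)$ collision estimate gives the claim for $k=1$. Taking $m_i=1$ for all but one index also yields the marginal statement $\mathbb{E}_{P}[f^i_j(\xi^i_j)]\to\mu^i(f^i_j)$, which is just the $m=1$ special case.

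To pass from $k=1$ to general $k$ I would induct on $k$ (or argue directly). Write the target expectation as $\mathbb{E}\bigl(G\cdot\prod_{j=1}^{m_k}f^k_j(\xi^k_j)\bigr)$ where $G=\prod_{i=1}^{k-1}\prod_{j}f^i_j(\xi^i_j)$ is bounded and depends only on the first $k-1$ blocks. Conditioning on the $\sigma$-algebra generated by blocks $1,\dots,k-1$, block $k$ remains exchangeable (invariance under $S_{n_k}$ is preserved), so the same collision-plus-average computation shows $\mathbb{E}(\,\cdot\mid\text{blocks }1..k-1)$ applied to $\prod_j f^k_j(\xi^k_j)$ is, up to $o(1)$ in probability, $\prod_j S^{(k,j)}_{n_k}$ with $S^{(k,j)}_{n_k}\to\mu^k(f^k_j)$ in probability; multiplying by the bounded $G$ and taking expectations peels off the $k$-th block's factor $\prod_j\mu^k(f^k_j)$, reducing to $k-1$ blocks. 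Iterating completes the proof.

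The main obstacle — though it is more bookkeeping than deep difficulty — is making the ``up to $o(1)$ in probability'' step rigorous when conditioning: one needs the convergence $S^{(j)}_n\to\mu(f_j)$ in probability to interact correctly with the conditioning in the multi-block induction, i.e. one should phrase things so that after conditioning on blocks $1,\dots,k-1$ the in-probability LLN for block $k$ still applies (it does, since the conditional law of block $k$ is still exchangeable and the LLN hypothesis is a statement about the unconditional law, so one instead keeps everything unconditional: expand block $k$'s product into an average, multiply by $G$, and use that $G\prod_j S^{(k,j)}_{n_k}\to G\prod_j\mu^k(f^k_j)$ in probability with uniformly bounded integrand). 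The collision term, bounded by $C(m,k)\sum_i 1/n_i\to 0$, is routine. No hypothesis beyond separate-block exchangeability and the separate-block LLN is needed, and no assumption linking the blocks is used, consistent with the stated conclusion.
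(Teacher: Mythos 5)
Your proposal is correct and takes essentially the same route as the paper: replace each block's product $\prod_j f_j(\xi_j)$ by the symmetrized sum over distinct indices, note that the collision terms contribute only $O(1/n)$ because the functions are bounded, and then apply the in-probability LLN together with bounded convergence to the resulting product of empirical averages. The only cosmetic difference is that the paper handles all blocks simultaneously (written out for $k=2$, $m_1=m_2=2$) instead of peeling off one block at a time by induction.
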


\begin{proof}
The second claim follows immediately from the LLN for the collections $\xi
_{j}^{i},$ $j=1,...,n_{i},$ since it claims that $\mathbb{E}_{P_{n_{1}%
,...,n_{k}}}\left[  f_{j}^{i}\left(  \xi_{j}^{i}\right)  \right]
\rightarrow\mu^{i}\left(  f_{j}^{i}\right)  $ as $n_{i}\rightarrow\infty.$ To
see $\left(  \ref{01}\right)  ,$ let us save on notations, and consider the
case $k=2,$ $m_{1}=m_{2}=2.$ So we are dealing with the random variables
$\xi_{1},...,\xi_{n},\eta_{1},...,\eta_{m},$ while their joint distribution
$P_{n,m}$ is $S_{n}\times S_{m}$-invariant. Due to the symmetry, the
expectation%
\begin{align*}
&  \mathbb{E}\left[  f_{1}\left(  \xi_{1}\right)  f_{2}\left(  \xi_{2}\right)
g_{1}\left(  \eta_{1}\right)  g_{2}\left(  \eta_{2}\right)  \right] \\
&  =\frac{1}{n\left(  n-1\right)  m\left(  m-1\right)  }\mathbb{E}\left(
\sum_{i\neq j,k\neq l}f_{1}\left(  \xi_{i}\right)  f_{2}\left(  \xi
_{j}\right)  g_{1}\left(  \eta_{k}\right)  g_{2}\left(  \eta_{l}\right)
\right)  .
\end{align*}
Since $f$-s and $g$-s are bounded,
\begin{align*}
&  \frac{1}{n\left(  n-1\right)  m\left(  m-1\right)  }\left\vert
\mathbb{E}\left(  \sum_{i\neq j,k\neq l}f_{1}\left(  \xi_{i}\right)
f_{2}\left(  \xi_{j}\right)  g_{1}\left(  \eta_{k}\right)  g_{2}\left(
\eta_{l}\right)  \right)  -\right. \\
-  &  \left.  \mathbb{E}\left(  \sum_{i,j,k,l}f_{1}\left(  \xi_{i}\right)
f_{2}\left(  \xi_{j}\right)  g_{1}\left(  \eta_{k}\right)  g_{2}\left(
\eta_{l}\right)  \right)  \right\vert \rightarrow0,
\end{align*}
as $m,n\rightarrow\infty.$ But%
\begin{align*}
&  \frac{1}{n\left(  n-1\right)  m\left(  m-1\right)  }\mathbb{E}\left(
\sum_{i,j,k,l}f_{1}\left(  \xi_{i}\right)  f_{2}\left(  \xi_{j}\right)
g_{1}\left(  \eta_{k}\right)  g_{2}\left(  \eta_{l}\right)  \right) \\
&  =\frac{nm}{\left(  n-1\right)  \left(  m-1\right)  }\mathbb{E}\left[
\left(  \frac{1}{n}\sum_{i}f_{1}\left(  \xi_{i}\right)  \right)  \left(
\frac{1}{n}\sum_{j}f_{2}\left(  \xi_{j}\right)  \right)  \left(  \frac{1}%
{m}\sum_{k}g_{1}\left(  \eta_{k}\right)  \right)  \left(  \frac{1}{m}\sum
_{l}g_{2}\left(  \eta_{l}\right)  \right)  \right]  .
\end{align*}
Due to LLN, the product in the square brackets goes to $\mu^{1}\left(
f_{1}\right)  \mu^{1}\left(  f_{2}\right)  \mu^{2}\left(  g_{1}\right)
\mu^{2}\left(  g_{2}\right)  $ in probability, so the theorem follows.
\end{proof}

\section{Conclusions}

The stochastic dominance technique introduced originally by A. Stolyar
\cite{St1} for the deterministic service time was extended here to the case of
a general service time distribution. Similar methods can hopefully be used for
the analysis of other mean-field models.

\end{document}